\documentclass[12pt]{amsart}

\usepackage{xcolor}
\usepackage[normalem]{ulem}
\usepackage[T1]{fontenc}

\usepackage[margin=2.5cm]{geometry}
\usepackage{amsthm}
\usepackage{enumerate}
\usepackage{amsmath}
\usepackage{hyperref}
\usepackage{dsfont} 

\theoremstyle{definition}
\newtheorem{definition}{Definition}
\newtheorem{theorem}[definition]{Theorem}
\newtheorem{lemma}[definition]{Lemma}

\newcommand{\mS}{\mathbb{S}^1}

\title[]{Ergodicity of $C^2$ minimal actions of Thompson group $T$ on the circle}
\author{Klaudiusz Czudek}
\address{Klaudiusz Czudek, Institute of Applied Mathematics, Faculty of Physics and Applied Mathematics, Gda{\'n}sk University of Technology, ul. Gabriela Narutowicza 11/12, 80-223 Gda{\'n}sk, Poland}
\email{klaudiusz.czudek@gmail.com}

\subjclass[2020]{Primary 37C85, Secondary 37E10}

\keywords{Thompson group, ergodicity, group action}

\begin{document}

\begin{abstract}
We show that every $C^2$ minimal action of Thompson group $T$ on the circle is ergodic with respect to the Lebesgue measure. If such action is not minimal then the Lebesgue measure of the exceptional minimal set is zero.
\end{abstract}

\maketitle

\section{Introduction}
\subsection{Definitions and Main Theorem}
If $G$ is a group and $M$ a manifold, then a homeomorphism $\Phi : G \rightarrow \textrm{Homeo}_+(M)$ is called \textit{a representation} of $G$ in $\textrm{Homeo}_+(M)$. We say the representation $\Phi$ is $C^r$ (resp. $C^\omega$) if the image of $\Phi$ is contained in $\textrm{Diff}^r(M)$ (resp. $\textrm{Diff}^\omega(M)$). We usually identify $g\in G$ with $\Phi(g)$ and therefore assume that $G\subseteq \textrm{Homeo}_+(M)$ and say that $G$ \textit{acts} on $M$ via orientation preserving homeomorphisms. If $\mu$ is equipped with a reference measure $\mu$, then we say the representation is \textit{ergodic} with respect to $\mu$ if the condition $g(A)=A$ for every $g\in G$ and some measurable $A\subseteq \mathbb{S}^1$ implies $\mu(A)\in \{0, 1\}$. Group actions on manifolds play an important role in mathematics (see e.g. \cite{Brown_Fisher_Hurtado_20}, \cite{Brown_Fisher_Hurtado_22}, \cite{Fisher_20}, \cite{Navas_11}, \cite{Navas_18}, \cite{Mann_15}).

In this paper we are interested in group actions on the circle. In that case exactly one of the following cases holds (see Theorem 2.1.1 in \cite{Navas_11})
\begin{enumerate}[(i)]
\item there exists a point $x\in \mathbb{S}^1$ whose orbit is finite,
\item the orbit of an arbitrary point is dense (the action is minimal),
\item there exists a unique minimal invariant compact set homeomorphic to the Cantor set (moreover, this set is contained in the set of accumulation points of orbit of an arbitrary point).
\end{enumerate}

There are two still open major problem in this theory (Section 3.5 in \cite{Navas_11}). The first problem attributed to Ghys and Sullivan is if every $C^2$ minimal action on the circle is necessarily ergodic with respect to the Lebesgue measure. The second problem is if for any $C^2$ action admitting the exceptional minimal set, is it true that the Lebesgue measure of this set is necessarily 0?

The answers to both are affirmative in the case when the group has an element with the irrational rotation number (proven independently by Katok \cite{Katok_Hasselblatt_95}, Theorem 12.7.2, and Herman \cite{Herman_79}) or when for every point $x\in\mS$ there exists $g\in G$ such that $g'(x)>1$ (Shub and Sullivan \cite{Shub_Sullivan_85}). However, the latter condition is not always satisfied, thus for a given group $G$ it makes sense to define the set of nonexpandable points NE($G$)=$\{x\in \mS : g'(x)\le 1 \ \textrm{for every $g\in G$}\}$. Examples of actions with nonempty NE($G$) include the canonical action of PSL($2,\mathbb{Z}$) and smooth actions of Thompson group $T$. A crucial contribution in dealing with that case has been made by Deroin, Kleptsyn and Navas \cite{DKN_09}, who introduced so called conditions ($\ast$) and ($\Lambda\ast$) and proved that every $C^2$ minimal action with ($\ast$) is ergodic with respect to the Lebesgue measure and every $C^2$ action admitting the exceptional minimial set $\Lambda$ with ($\Lambda\ast$) has Leb($\Lambda$)=0. Those conditions are formulated as follows:

\begin{definition}
Let $G$ act on $\mS$ minimally. We say that $G$ has property ($\ast$) if for every  $x\in$ NE($G$) there exist $g_+$ and $g_-$ such that $g_+(x)=x$ and $g_+$ is parabolic repelling from the right, $g_-(x)=x$ and $g_-$ is parabolic repelling from the left.
\end{definition}

\begin{definition}
Let $G$ act on $\mS$ and admit the unique exceptional minimal set $\Lambda$. We say that $G$ has property ($\Lambda\ast$) if for every  $x\in$ NE($G$)$\cap \Lambda$ such that $x$ is an accumulation point of $\Lambda$ from the right there exist $g_+$ and such that $g_+(x)=x$ and $g_+$ is parabolic repelling from the right and for every  $x\in$ NE($G$)$\cap \Lambda$ such that $x$ is an accumulation point of $\Lambda$ from the left there exist $g_-$ such that $g_-(x)=x$ and $g_-$ is parabolic repelling from the left.
\end{definition}

Paper \cite{DKN_09} provides also nontrivial examples of actions with property ($\ast$). The actions with nonempty set of nonexpandable points has been studied subsequently (\cite{Kleptsyn_Filimonov_12'}, \cite{Kleptsyn_Filimonov_12}). The properties $(\ast)$ or $(\Lambda\ast)$ has been proven later for groups with one end satisfying some additional assumptions \cite{Filimonov_Kleptsyn_14}, for groups acting analytically on the circle that act freely \cite{DKN_18} or have infinitely many ends \cite{infinite_ends}.

One of the examples provided in \cite{DKN_09} with property ($\ast$) is the canonical action of PSL($2,\mathbb{Z}$). The other one are certain actions of Thompson group $T$, one of the famous Thompson group $F$, $T$ and $V$that are studied intensively nowadays (e.g. \cite{Poisson_boundary}, \cite{Thompson2}, \cite{Thompson3}, see also \cite{Cannon_Floyd_Parry_96} for a general introduction to Thompson groups). Thompson group $T$ can be defined as the group of all piecewise affine circle homeomorphisms with finite number of breaking points being diadic rational numbers and slopes being integer powers of two. Although this action is obviously not $C^1$, Thurston has shown that one can actually construct an action of $T$ on the circle by $C^\infty$ diffeomorphisms. Later the work of Ghys, Sergiescu \cite{Ghys_Sergiescu_87} has shown that there are plenty of possible actions of Thompson group $T$ on $\mS$ (this construction is described in Section \ref{S:Ghys}). For now we just state that to every circle covering map of degree two $\varphi$ with $\varphi(0)=0$ we can associate certain action of $T$ on the circle that is denoted in this paper by $T_\varphi$. Depending on the properties of $\varphi$ this actions is either minimal or admits the exceptional minimal set. The original action of $T$ by piecewise affine circle homeomorphisms is obtained when $\varphi(x)=2x$ mod $1$ and if $\varphi$ is $C^2$, $\varphi'(0)=1$, $\varphi''(0)=0$ then each element of $T_\varphi$ is a $C^2$ circle diffeomorphism. Deroin, Kleptsyn and Navas \cite{DKN_09} proved property ($\ast$) in the case when $\varphi'(x)>1$ for $x\not = 0$ (see more details in Section \ref{S:Ghys} and \cite{Navas_11}). Here we show that every minimal action of Thompson group $T$ on the circle satisfies ($\ast$). Moreover, \cite{DKN_09} do not provide any example of $C^2$ action admitting exceptional minimal set with property ($\Lambda\ast$).  Here we show that every such action of Thompson group $T$ satisfies ($\Lambda\ast$).

\begin{theorem}
\label{T:1}
Every $C^2$ minimal action of Thompson group $T$ on the circle satisfies property $(\ast)$ and therefore is ergodic. Every $C^2$ action admitting an exceptional minimal set satisfies property $(\Lambda\ast)$ and consequently the Lebesgue measure of the exceptional minimal set is zero.
\end{theorem}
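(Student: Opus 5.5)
The plan is to reduce everything to the standard piecewise affine model, verify $(\ast)$ and $(\Lambda\ast)$ by separating \emph{dyadic} points from \emph{non-dyadic} points of that model, and then invoke \cite{DKN_09}. The last step is immediate: once $(\ast)$ holds for a minimal $C^2$ action, ergodicity follows from the theorem of Deroin, Kleptsyn and Navas, and once $(\Lambda\ast)$ holds, $\mathrm{Leb}(\Lambda)=0$ follows in the same way.

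\textbf{Step 1: reduction to the model.} By Ghys' classification, every action of $T$ on $\mS$ without finite orbits is semiconjugate to the standard piecewise affine action. $T$ is simple and has no nontrivial finite quotients, so a $C^2$ action cannot have a finite orbit unless it is trivial; hence every nontrivial $C^2$ action is semiconjugate to the standard one.
\begin{itemize}
\item In the minimal case the semiconjugacy is a conjugacy by a homeomorphism $h$, so the $C^2$ action is $g\mapsto h\, g_{PL}\, h^{-1}$.
\item In the exceptional case, $h$ collapses the gaps of $\Lambda$. Each point $x\in\Lambda$ accumulated by $\Lambda$ from the right corresponds to a point $h(x)$ of the model, and the right germ at $x$ of the $C^2$ action is topologically conjugate to the right germ of the PL action at $h(x)$.
\end{itemize}
This is consistent with the description via $T_\varphi$ in Section \ref{S:Ghys}: there $h$ is built from the degree-two map $\varphi$, and dyadic points of the model correspond to the backward $\varphi$-orbit of $0$.

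\textbf{Step 2: images of dyadic points.} Let $x$ correspond to a dyadic point $p$ (in the exceptional case, $x$ is accumulated by $\Lambda$ on the relevant side). In $T$ there is an element $g$ fixing $p$ whose right germ is affine with slope $1/2$ and whose left germ is the identity. In the $C^2$ action, $g(x)=x$ and $g$ topologically attracts points on the right of $x$, so $g^{-1}$ repels from the right. Now suppose $x\in$ NE$(G)$. Then $(g^{-1})'(x)\le 1$ and $g'(x)\le1$, which forces $(g^{-1})'(x)=1$. Hence $g_+=g^{-1}$ is parabolic repelling from the right. The symmetric construction (slope $1/2$ on the left, identity on the right) gives $g_-$. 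This establishes $(\ast)$, respectively $(\Lambda\ast)$, at all such points.

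\textbf{Step 3: non-dyadic points are expandable (main obstacle).} It remains to show that a point $x$ corresponding to a non-dyadic $p$ never lies in NE$(G)$. For irrational $p$ this is unavoidable: every element of $T$ fixing $p$ has trivial germ there, so no parabolic element is available and the point must be excluded from NE$(G)$ instead.

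I would proceed as follows.
\begin{enumerate}
\item Choose standard dyadic intervals $I_n\ni p$ shrinking to $p$. For each $n$, choose $f_n\in T$ mapping $I_n$ affinely onto one of finitely many fixed intervals $J$. The choice is dictated by the binary expansion of $p$, i.e.\ by iterating the shift, which corresponds to $\varphi$ in the $T_\varphi$ picture.
\item In the $C^2$ model, $|h^{-1}(I_n)|\to0$ while $|h^{-1}(J)|$ stays bounded below.
\item It then suffices to bound the distortion of $f_n$ on $h^{-1}(I_n)$ uniformly in $n$. The mean value theorem then gives $f_n'(x)\to\infty$, so $x\notin$ NE$(G)$.
\end{enumerate}

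To control distortion, I would factor $f_n$ as a product of a bounded set of generators along the binary expansion, and arrange (using the freedom in $T$) that the intermediate images of $I_n$ are pairwise disjoint. The usual $C^2$ estimate bounds the log-distortion by a constant times the sum of the lengths of these images, hence by a constant times the length of the circle.

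The delicate point is that the expansion of $p$ may contain long runs of $0$'s or $1$'s, during which the orbit passes near the parabolic point. I would try to handle these runs by treating them as iterates of a single parabolic germ and using the standard distortion estimates for $C^2$ parabolic germs, as in \cite{DKN_09}. This is the step I expect to require the most work. For rational non-dyadic $p$, the argument can be shortened: such a $p$ is fixed by an element of $T$ with slope $2^k\neq1$ on both sides, and I would check that this produces expansion directly.
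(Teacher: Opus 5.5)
Your Steps 1--2 and the final appeal to \cite{DKN_09} are fine. The germ argument of Step 2 is a pleasant alternative to the paper's use of $\varphi^s$ at parabolic periodic points.

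\textbf{The central claim of Step 3 is false.} A point corresponding to a non-dyadic $p$ can be nonexpandable. Take $\varphi(t)=2t-\frac{1}{6\pi}\sin(6\pi t)$.
\begin{enumerate}[(i)]
\item $\varphi'(t)=2-\cos(6\pi t)\ge 1$, with equality only at $0,\frac{1}{3},\frac{2}{3}$, and $\varphi''(0)=0$. So $T_\varphi$ is a $C^2$ action.
\item It is minimal: $|\varphi^s(I)|=\int_I(\varphi^s)'>|I|$ for every interval $I$, which rules out periodic intervals.
\item $\{\frac{1}{3},\frac{2}{3}\}$ is a period-two orbit with $(\varphi^2)'(\frac{1}{3})=1$.
\item Since $\frac{1}{3}$ is not $\varphi$-dyadic, every $g\in T_\varphi$ coincides near $\frac{1}{3}$ with some $\varphi_J^{-1}\circ\varphi_I$ with $\frac{1}{3}\in\mathrm{int}(I)$. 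Hence $g'(\frac{1}{3})=(\varphi^n)'(\frac{1}{3})/(\varphi^m)'(g(\frac{1}{3}))=1/(\varphi^m)'(g(\frac{1}{3}))\le 1$.
\end{enumerate}
Thus $\frac{1}{3}\in\mathrm{NE}(T_\varphi)$, although it corresponds to $\frac{1}{3}$ or $\frac{2}{3}$ in the piecewise affine model. Your shortcut for rational points fails here: the slope-$4$ element fixing the point is locally $\varphi^2$, whose derivative there is $1$. For the same reason, no uniform distortion bound for your $f_n$ can hold at this point.

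Rational points should instead be treated exactly like dyadic ones. An element fixing $p$ with slope $2^k\neq 1$ is topologically repelling at $x$ on each relevant side, hence parabolic repelling if $x\in\mathrm{NE}$. So the right dichotomy is rational versus irrational; in the paper it is parabolic periodic points of $\varphi$ versus all other points.

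\textbf{The irrational case lacks its mechanism.} Showing that points corresponding to irrational $p$ are expandable carries all the analytic content, and your plan does not contain what makes it work.
\begin{enumerate}[(i)]
\item You expect parabolic behaviour only near the fixed point (long runs of $0$'s or $1$'s). The example shows that parabolic periodic orbits may have any period. A priori you know neither where they are nor that there are finitely many.
\item The images $\varphi^j(h^{-1}(I_n))$ are in general not disjoint; they are nested while the orbit of $p$ stays close to a periodic orbit. ``The freedom in $T$'' does not make them disjoint.
\end{enumerate}

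The paper provides exactly two ingredients:
\begin{enumerate}[(a)]
\item Periodic points of large period have multiplier at least any prescribed $K$, so there are only finitely many parabolic periodic points.
\item For $x$ not parabolic periodic, take a first-return branch $\varphi^k:J\to A$ to a small nice interval $A$ around $x$. It has distortion at most $C_0$ (Lemma \ref{L:3.1}) and contains a periodic point whose multiplier is either $>\lambda_1$ (period at most $s_0$) or at least $2e^{C_0}$. This transfers to $(\varphi^k)'(x)>1$.
\end{enumerate}
In the minimal case these are Theorem 2.1 and Lemma 3.6 of \cite{LPS}. In the exceptional case the paper re-proves them in Lemmas \ref{L:3.2}--\ref{L:3.3}, using nice intervals at a fixed point of $\Lambda$, the decay of $\varepsilon_n$ and the absence of wandering intervals, followed by the first-return argument.

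Note also that one element with $g'(x)>1$ suffices; you do not need $f_n'(x)\to\infty$. Without (a) and (b), or a genuine substitute, Step 3 is a heuristic rather than a proof.
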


The proof of the first part (minimal case) is an easy application of  Theorem 2.1 and Lemma 3.6 in \cite{LPS}. To show the second part (exceptional minimal set) we cannot apply those result directly. However, it is possible to modify the proofs to obtain the desired theorem. 

\subsection{Acknowledgments}
The author learned about the reference \cite{LPS} (that plays the fundamental role in the proof) from Sebastian van Strien, to whom he is grateful. The author is also thankful to Daniel Sell for useful discussions.

\section{Preliminaries}
\subsection{Notation}
If $G$ is a subgroup of Homeo$_+$($\mathbb{S}^1$), $x\in \mathbb{S}^1$, then the orbit of $x$ is the set Orb($x$)$=\{ g(x) : g\in G\}$. If $x, y\in \mS$, then $(x,y)$ stands for the open arch joining $x$ and $y$ in the counterclockwise orientation. $|J|$ is the length of the interval $J$.

\subsection{Ghys-Sergiescu smooth realization}
\label{S:Ghys}
(cf. Section 1.5.2 in \cite{Navas_11}, \cite{Ghys_Sergiescu_87} or Section 5.1 in \cite{DKN_09}). Throughout the paper we use the additive notation to describe the points on the circle. Let $\varphi : \mS \rightarrow \mS$ be the circle covering map of degree two with $\varphi(0)=0$. For each $n$ the set $\varphi^{-n}(0)$ consists of $2^n$ points that split $\mS$ into $2^n$ nonoverlapping closed intervals $I_{n,0},\cdots, I_{n,2^n-1}$. We call such intervals $\varphi$-dyadic intervals of degree $n$. We call an interval $I\subseteq \mS$ a $\varphi$-dyadic if it is $\varphi$-dyadic of degree $n$ for some $n$.

For a $\varphi$-dyadic interval $I$ of degree $n$ we denote by $\varphi_I$ the homeomorphism $\varphi_I: I \rightarrow [0,1]$, $\varphi_I:=\varphi^n$. Let now $T_\varphi$ be the set of orientation preserving circle homeomorphisms $g$ such that there exist two partitions of $\mS$ into $\varphi$-dyadic intervals $I_1,\cdots, I_r$ and $J_1,\cdots,J_r$ such that $g(I_k)=J_k$ and $g(x)=\varphi_{J_k}^{-1}\circ \varphi_{I_k}(x)$ for $x\in I_k$ for every $k=1,\cdots, r$. It can be show that $T_\varphi$ is closed under composition and forms a group that is isomorphic to the Thompson group $T$ (Section 1.5.2 in \cite{Navas_11}). It can be shown that every action of the Thompson group $T$ on the circle is of this form (Theorem 4.16 in \cite{Boudec_Bon_18}). Moreover, if $\varphi$ is a $C^r$ diffeomorphism locally in the neighborhood of every point, $\varphi'(0)=1$ and $\varphi^{(j)}(0)=0$ for $j=2,\cdots,r$ then every element of $T_\varphi$ is a $C^r$ diffeomorphism (Proposition 1.5.5 \cite{Navas_11}).

\subsection{Minimality and exceptional minimal set}\label{S:minimal}
Let $G$ be an arbitrary subgroup of Homeo$_+$($\mathbb{S}^1$). Theorem 2.1.1 in \cite{Navas_11} says that exactly one possibility occurs for the action of $G$:
\begin{enumerate}[(i)]
\item there exists a point $x\in \mathbb{S}^1$ whose orbit is finite,
\item the orbit of an arbitrary point is dense,
\item there exists a unique minimal invariant compact set homeomorphic to the Cantor set (moreover, this set is contained in the set of accumulation points of orbit of an arbitrary point) .
\end{enumerate}

Let now $G$ be the Thompson group $T_\varphi$ associated to the circle covering map $\varphi$. It is clear that the orbit of $x\in \mathbb{S}^1$ is the sum of forward and backward trajectory of $x$ by the action of $\varphi$. Since the backward trajectory in such case is always infinite, $T_\varphi$ does not admit a finite orbit. Further, by the Shub Theorem (Theorem 2.1 in \cite{deMelo_vanStrien}) $\varphi$ is semiconjugated to the doubling map $x\longmapsto 2x$ mod 1. If the semiconjugacy is a conjugacy then for every $y$ the set of points $x$ such that $\varphi^i(x)=y$ for some $i\ge 0$ is dense, which implies that all the orbits by the action of $G$ are dense and (ii) holds. If the semiconjugacy is not a conjugacy then, as explained in Remark 2 after Theorem 2.1 in \cite{deMelo_vanStrien}, there exist a finite number of maximal periodic (proper) intervals $I_1, \cdots, I_r$. It means there exist $s_1, \cdots, s_r$ such that $\varphi^{s_j}$ is a homeomorphism from $I_j$ onto itself for $j=1,\cdots, r$ and neither of $I_j$'s is contained in another proper interval with that property. Then
\begin{equation}
\label{E:2.lambda}
\Lambda = \mathbb{S}^1 \setminus \bigcup_{n=0}^\infty \varphi^{-n} (I_1\cup\cdots\cup I_r)
\end{equation}
is a compact, perfect and nowhere dense set and therefore is homeomorphic to the Cantor set. Moreover, it is full invariant, i.e. $\varphi^{-1}(\Lambda)=\Lambda$. The last property implies that $\Lambda$ is invariant for the group action and (iii) holds for $T_\varphi$. Therefore the action of Thompson group $T_\varphi$ on the circle is minimal if and only if $\varphi$ is conjugated to the doubling map, which holds if and only if $\varphi$ is topologically expanding (i.e. for every open interval $U$ there exists $n>0$ such that $\mathbb{S}^1\subseteq \varphi^n(U)$). By Corollary 2.2 in \cite{LPS} this is equivalent to the fact that for every periodic point $x$ of $\varphi$ with period $s$ there exists an open neighborhood $U$ of $x$ such that $|\varphi^s(y)-x|>|y-x|$ for every $y\in U$, $y\not = x$. When $\varphi$ satisfies the last condition we call it orbit expanding.

\begin{lemma}
\label{L:2.2}
Let $\varphi$ be a circle covering map of degree two. Let $\varphi$ be not topologically conjugated to the doubling map and let $\Lambda$ denote the full invariant set of $\varphi$ defined in \eqref{E:2.lambda}. Let $x$ be a periodic point of period $s$.
\begin{enumerate}[(a)]
\item If there exists $\varepsilon>0$ such that $(x, x+\varepsilon)\cap \Lambda$ is empty then $x$ is the left endpoint of a periodic interval of period $s$.
\item If for every $\varepsilon>0$ the set $(x, x+\varepsilon)\cap \Lambda$ is not empty then $\varphi^s$ is repelling is some right neighborhood of $x$. 
\end{enumerate}
The symmetric statement holds in the left neighborhood of $x$.
\end{lemma}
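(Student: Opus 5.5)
We will work with the lift of $\varphi^s$ near $x$ and use the structure of $\Lambda$ given in \eqref{E:2.lambda}. Throughout, $F:=\varphi^s$ fixes $x$ and is a local orientation-preserving homeomorphism near $x$, because a covering map is a local homeomorphism. Two facts will be used repeatedly. First, $\Lambda$ is full invariant, so $\varphi^{-1}(\Lambda)=\Lambda$ and $\varphi(\Lambda)=\Lambda$. Second, the complement of $\Lambda$ is the union of the preimages $\varphi^{-n}(\mathrm{int}\, I_j)$, each component of which is mapped homeomorphically by some iterate onto a periodic interval.

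\emph{Part (a).} Assume $(x,x+\varepsilon)\cap\Lambda=\emptyset$. We may take $\varepsilon$ maximal, so that $(x,x+\varepsilon)$ is a component of $\mS\setminus\Lambda$ whose left endpoint is $x\in\Lambda$. Indeed $x\in\Lambda$: $\Lambda$ contains all periodic points outside the interiors of the periodic intervals, and if $x$ lay in the interior of some component $J$ of the complement, we would work with that component instead. Call this component $J=(x,b)$.

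By \eqref{E:2.lambda}, some iterate $\varphi^n$ maps $J$ homeomorphically onto the interior of some $I_j$. The key observation is that gaps of $\Lambda$ are mapped to gaps of $\Lambda$ for as long as $\varphi^k|_J$ is injective; injectivity holds throughout, since a gap is mapped into the interior of some $I_j$ after finitely many steps and $\varphi$ is injective on small sets. Hence $F(J)$ is again a gap of $\Lambda$ with left endpoint $F(x)=x$, because $F$ preserves orientation and fixes $x$. Gaps with a common left endpoint coincide, so $F(J)=J$.

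Thus $\overline J$ is an interval mapped homeomorphically onto itself by $\varphi^s$, and it is contained in the closure of $\mS\setminus\Lambda$. We then argue that $\overline J$ is itself one of the maximal periodic intervals (up to an iterate): the orbit of $J$ lands inside some $I_j$, and maximality of $I_j$ together with $J$ being a full gap forces $J$ to equal that iterate's image. The period of $J$ is $s$, since $F(J)=J$. This gives (a).

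\emph{Part (b).} Assume $\Lambda$ accumulates at $x$ from the right. We first rule out fixed points of $F$ in $(x,x+\delta)$ accumulating at $x$, as well as an interval of fixed points. The second case is easy: such an interval would be periodic and hence disjoint from $\Lambda$ in its interior, contradicting accumulation. For the first case, suppose there are fixed points $p_k\downarrow x$. Choose points of $\Lambda$ between them and use the structure of $\Lambda$ (nowhere dense, with complementary gaps eventually mapped onto the $I_j$) to get a contradiction. I expect this to be the main obstacle. The plan is to use the fact that $\varphi$ is semiconjugate to the doubling map by a monotone map $h$. Every point of $\Lambda$ that is not an endpoint of a gap is mapped injectively by $h$, and $h$ conjugates $F$ to $x\mapsto 2^s x$ near $h(x)$. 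The doubling map has isolated periodic points and is expanding there, so $F$ moves points of $\Lambda$ near $x$ away from $x$.

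This expansion argument also finishes the proof. For $y\in\Lambda$ close to $x$ on the right, $h(F(y))-h(x)=2^s(h(y)-h(x))>h(y)-h(x)$, so $F(y)>y$. Since $\Lambda$ accumulates at $x$, there are points $y_k\in\Lambda$ with $y_k\downarrow x$ and $F(y_k)>y_k$. By continuity and monotonicity of $F$, together with the absence of fixed points of $F$ in a small right neighborhood (which follows from $h$-injectivity arguments, since a fixed point of $F$ would map to the isolated fixed point $h(x)$, so $h$ would be constant on an interval containing points of $\Lambda$, which is impossible except for gaps), we obtain $F(y)>y$ on the whole neighborhood $(x,x+\delta)$. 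This is exactly the statement that $\varphi^s$ is repelling on the right of $x$. The left-hand version follows by reversing orientation.
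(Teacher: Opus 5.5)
Your proof is correct. Part (a) follows the paper's argument, and part (b) takes a genuinely different route.

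\textbf{Part (a).} As in the paper, you take the gap $J$ of $\Lambda$ with left endpoint $x$. Gaps map homeomorphically onto gaps and $\varphi^s$ fixes $x$, so $\varphi^s(J)=J$. You give more detail than the paper here. One correction: injectivity of $\varphi$ on a gap should come from full invariance, since $\varphi(J)\cap\Lambda=\emptyset$ means $\varphi|_J$ cannot wrap around the circle. It does not come from gaps being small.

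\textbf{Part (b), the paper's route.} The paper stays inside one branch of $\varphi^s$. On the component of $\mS\setminus\varphi^{-s}(0)$ containing $x$, the map $\varphi^s$ is increasing and onto the circle. If it were not repelling on the right, the intermediate value theorem would give a second fixed point $y$. Then $[x,y]$ is a periodic interval, whose interior misses $\Lambda$ by \eqref{E:2.lambda}, contradicting accumulation.

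\textbf{Part (b), your route.} You pull back the expansion of $t\mapsto 2^s t$ through Shub's semiconjugacy. With $\tilde F$, $\tilde h$ lifts of $\varphi^s$, $h$ and $\tilde F(\tilde x)=\tilde x$, you use the identity $\tilde h(\tilde F(\tilde y))-\tilde h(\tilde x)=2^s\bigl(\tilde h(\tilde y)-\tilde h(\tilde x)\bigr)$.

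The input you assert but do not prove is that $h$ is not constant on any $[x,x+\eta]$. It follows from the same fact the paper uses. Suppose the fibre of $h$ through $x$ were $[c,d]$ with $d>x$.
\begin{itemize}
\item Looking just to the right of $\tilde d$, the identity gives $\tilde F(\tilde d)\ge\tilde d$.
\item Since $\tilde h(\tilde F(\tilde d))=\tilde h(\tilde x)$, the point $\tilde F(\tilde d)$ stays in the fibre, so $\tilde F(\tilde d)\le\tilde d$.
\end{itemize}
So $[x,d]$ would be a periodic interval containing points of $\Lambda$, which is impossible.

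Once this is in place, your detour through excluding nearby fixed points and then applying the intermediate value theorem is unnecessary. The inequality $\tilde h(\tilde y)>\tilde h(\tilde x)$ for all $\tilde y>\tilde x$ already gives $\tilde F(\tilde y)>\tilde y$ pointwise.

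\textbf{Comparison.} The paper's proof is more elementary, using only the covering property and the intermediate value theorem. Yours explains the repulsion as inherited from the doubling map, at the cost of invoking the semiconjugacy and its fibre structure.
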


\begin{proof}
Let $I$ be the component of $\mS \setminus \Lambda$ whose left endpoint is $x$. Then according to \eqref{E:2.lambda} either $I$ is a periodic interval or $\varphi^n(I)$ is a periodic interval for some $n>0$. Since the left endpoint is periodic the latter cannot occur. This proves (a).

To show (2) it suffices to observe that if $\varphi^s$ is not repelling in some right neighborhood of $x$ then there exists another fixed point $y\in I$ of $\varphi^s$, where $I$ is the component of $\mS \setminus \varphi^{-s}(0)$ containing $x$.  Then the arc $(x, y)$ joining $x$ and $y$ in the couterclockwise orientation is clearly a periodic interval of period $s$ and, according to \eqref{E:2.lambda}, $\Lambda$ does not contain a point from $(x,y)$, which leads to a contradiction.
\end{proof}

\subsection{Nice intervals}

We call an interval $I$ nice if $\varphi^n(\partial I) \cap \textrm{int}(I)=\emptyset$ for every $n\ge 1$, where $\partial I$ and int$(I)$ stands for the boundary and interior of $I$, respectively. {

\begin{lemma}
\label{L:2.nice}Let $\varphi$ be a circle covering map of degree two. Let $\varphi$ be not topologically conjugated to the doubling map and let $\Lambda$ denote the full invariant set of $\varphi$ defined in \eqref{E:2.lambda}. Let $x_0\in \Lambda$ be a fixed point. If $(x_0, x_0+\varepsilon)\cap \Lambda$ is not empty for every $\varepsilon>0$, then there exists an arbitrarily small nice interval whose left endpoint is $x_0$. The analogous statement holds if $(x_0-\varepsilon, x_0)\cap \Lambda$ is not empty for every $\varepsilon>0$.
\end{lemma}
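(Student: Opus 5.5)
The plan is to take the right endpoint of the nice interval to be a suitably chosen preimage of $x_0$, picked with minimal preimage level. Let
$$B=\{y\in\mS : \varphi^n(y)=x_0 \text{ for some } n\ge 0\}$$
be the backward orbit of $x_0$. For $y\in B\setminus\{x_0\}$, its \emph{level} $\ell(y)$ is the least $n\ge1$ with $\varphi^n(y)=x_0$.

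First I show that $B$ accumulates at $x_0$ from the right. Since $x_0$ is fixed, $\varphi(B)\subseteq B$ and $\varphi^{-1}(B)\subseteq B$, so $B$ is fully invariant. Every $g\in T_\varphi$ acts piecewise by maps of the form $\varphi_J^{-1}\circ\varphi_I$, so $B$ is $T_\varphi$-invariant. Indeed, by the remark in Section \ref{S:minimal}, the $T_\varphi$-orbit of $x_0$ is the union of its forward and backward $\varphi$-trajectories, and this union is $B$. By item (iii) of Section \ref{S:minimal}, $\Lambda$ is contained in the set of accumulation points of the orbit of $x_0$, hence $\Lambda\subseteq\overline{B}$. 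By hypothesis $(x_0,x_0+\varepsilon)\cap\Lambda\neq\emptyset$ for every $\varepsilon>0$, and this intersection is an open set meeting $\overline{B}$. Therefore $B\cap(x_0,x_0+\varepsilon)\neq\emptyset$ for every $\varepsilon>0$.

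Next, fix $\varepsilon>0$, small enough that $(x_0,x_0+\varepsilon)$ is a short arc. Let $k$ be the minimal level among the points of $B\cap(x_0,x_0+\varepsilon)$. Choose $y\in B\cap(x_0,x_0+\varepsilon)$ with $\ell(y)=k$, and set $I=[x_0,y]$. I claim $I$ is nice. The endpoint $x_0$ is fixed, so $\varphi^n(x_0)=x_0\notin\textrm{int}(I)$ for all $n$. For $y$, if $n\ge k$ then $\varphi^n(y)=x_0\notin\textrm{int}(I)$. If $1\le n<k$, then $\varphi^n(y)\in B\setminus\{x_0\}$ by minimality of $\ell(y)$, and $\ell(\varphi^n(y))=k-n<k$. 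If $\varphi^n(y)$ lay in $(x_0,y)\subseteq(x_0,x_0+\varepsilon)$, this would contradict the minimality of $k$. Hence $\varphi^n(\partial I)\cap\textrm{int}(I)=\emptyset$ for all $n\ge1$, and $|I|<\varepsilon$, which gives arbitrarily small nice intervals with left endpoint $x_0$.

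The left-sided statement follows by the symmetric argument, using $B\cap(x_0-\varepsilon,x_0)\neq\emptyset$. The only step needing care is the density step: identifying $B$ with the $T_\varphi$-orbit of $x_0$ and invoking the fact that $\Lambda$ lies in the accumulation set of every orbit. The combinatorial minimal-level argument is then routine.
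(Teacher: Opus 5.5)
Your proof is correct, but it takes a different route from the paper's.

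\textbf{The paper's route.} It uses that $\Lambda$ is nowhere dense to find a gap $I$ of $\mS\setminus\Lambda$ with $\overline{I}\subseteq(x_0,x_0+\varepsilon)$. The left endpoint $z$ of that gap is preperiodic, say $\varphi^n(z)=p$ with $p$ periodic. The right endpoint of the nice interval is then taken to be the point $z'$ closest to $x_0$ on the right in the finite, forward-invariant set $\bigcup_{j=0}^{n}\varphi^{-j}(\mathrm{orb}(p))$. Niceness of $[x_0,z']$ follows immediately from forward invariance.

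\textbf{Your route.} You work with the backward orbit $B$ of $x_0$ itself. You get accumulation of $B$ at $x_0$ from the right from minimality, via item (iii). You then replace ``closest point of a finite forward-invariant set'' by a minimal-level argument: every forward image of $y$ has strictly smaller level, so it cannot enter the window $(x_0,x_0+\varepsilon)$.

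\textbf{What each buys.} The paper's argument uses only the gap structure of $\Lambda$ (nowhere density and preperiodic gap endpoints) and needs no minimality. Yours needs $\Lambda\subseteq\overline{\mathrm{Orb}(x_0)}$. The paper itself uses this fact later, as density of $\bigcup_n\varphi^{-n}(p_0)$ in $\Lambda$ in the proof of Lemma \ref{L:3.epsilon}. In exchange, your argument avoids any analysis of the gaps. It also yields nice intervals whose right endpoints are preimages of the fixed point, which fits naturally with how preimages of $x_0$ are used afterwards.

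\textbf{Two small remarks.}
\begin{enumerate}
\item You only need the inclusion $\mathrm{Orb}(x_0)\subseteq B$, which your direct computation with the maps $\varphi_J^{-1}\circ\varphi_I$ already establishes. So you need not lean on the paper's description of orbits as unions of forward and backward trajectories. That description is imprecise in general (orbits are grand orbits), although it is correct for a fixed point.
\item The set $(x_0,x_0+\varepsilon)\cap\Lambda$ is not open. What you mean is that the open arc $(x_0,x_0+\varepsilon)$ meets $\Lambda\subseteq\overline{B}$, hence meets $B$.
\end{enumerate}
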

\begin{proof}
The set $\Lambda$ is nowhere dense, therefore our assumption implies that for every $\varepsilon>0$ there exists a component $I$ of $\mS \setminus \Lambda$ with $\overline{I}\subseteq (x_0, x_0+\varepsilon)$. By the definition of $\Lambda$, the left endpoint $z$ of $I$ is a preperiodic point, i.e. there exists a periodic point $p$ of period $s$ and a number $n$ such that $\varphi^n(z)=p$. Let $z'$ be  the element of $\bigcup_{j=0}^n \varphi^{-j}(\textrm{orb}(p))$ such that $(x_0,z')$ does not contain any other point from that set. Then $(x_0,z)$ is a nice interval.
\end{proof}

\subsection{Distortion}
Let
$$\chi(h, J) = \sup_{x,y\in J} \log \frac{h'(x)}{h'(y)}$$
be the distortion of  the function $h$ on interval J.

\begin{lemma}[cf. Lemma 3.1 in \cite{LPS}]
\label{L:3.1}
There exists a constant $C_0>0$ such that if $J$ is an interval and $n\ge 1$ is such that $J, \varphi(J), \cdots, \varphi^{n-1}(J)$ are pairwise disjoint, then
$$\chi(\varphi^n, J) \le C_0.$$
\end{lemma}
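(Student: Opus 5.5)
We prove the standard Koebe-type distortion bound for circle maps of class $C^2$ with no critical points, following the classical Denjoy argument. The one-dimensional theory behind it is in \cite{deMelo_vanStrien}. We assume, as the smooth realization requires, that $\varphi$ is a local $C^2$ diffeomorphism of $\mS$. Then $\varphi'$ is bounded away from zero on the compact circle. Consequently $\log \varphi'$ is Lipschitz with some constant $K$, since $(\log\varphi')' = \varphi''/\varphi'$ is continuous on $\mS$.

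The first step is to write the distortion of the composition as a telescoping sum. For $x,y\in J$, the chain rule gives
$$\log\frac{(\varphi^n)'(x)}{(\varphi^n)'(y)} = \sum_{i=0}^{n-1}\Bigl(\log\varphi'(\varphi^i(x)) - \log\varphi'(\varphi^i(y))\Bigr).$$
The second step bounds each term in absolute value by the Lipschitz estimate:
$$\bigl|\log\varphi'(\varphi^i(x)) - \log\varphi'(\varphi^i(y))\bigr| \le K\,|\varphi^i(x)-\varphi^i(y)| \le K\,|\varphi^i(J)|.$$
This uses that $\varphi^i(J)$ is an interval containing both $\varphi^i(x)$ and $\varphi^i(y)$.

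The third step uses the disjointness hypothesis. Since $J,\varphi(J),\dots,\varphi^{n-1}(J)$ are pairwise disjoint arcs on the circle, their total length is at most $1$. Hence $\sum_{i=0}^{n-1}|\varphi^i(J)|\le 1$, so $\chi(\varphi^n,J)\le K$, and we may take $C_0=K$.

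The only point requiring care is that each $\varphi^i(J)$ is again an interval rather than something wrapping around the circle. This would be an issue only if $\varphi^i(J)$ could cover the whole circle. Pairwise disjointness with $J$ excludes this for $i\ge1$, since $\varphi^i(J)$ would then meet $J$.

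We also need to check that $\varphi^{i}$ is injective on $J$. If $J$ is taken closed, pairwise disjointness forces $\varphi^i(J)$ to be a genuine arc for each $i\le n-1$. The image $\varphi^{n}(J)$ may be large, but it does not enter the sum.

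No real obstacle is expected. The only subtle point is that $\varphi$ must be $C^2$ globally with $\varphi'>0$, which is the standing regularity assumption for $C^2$ actions $T_\varphi$.
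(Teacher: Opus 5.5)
Your proof is correct. The paper gives no argument of its own and simply quotes Lemma 3.1 of \cite{LPS}; your Denjoy-type summation (using that $\log\varphi'$ is Lipschitz for a $C^2$ local diffeomorphism, and that $\sum_{i=0}^{n-1}|\varphi^i(J)|\le 1$ by disjointness) is the standard proof of this bound for maps without critical points.

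The injectivity discussion at the end is not needed, since any connected set containing $\varphi^i(x)$ and $\varphi^i(y)$ already has length at least their distance.
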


\section{Proof of Theorem \ref{T:1}}
Let $T_\varphi$ be the action of Thompson group $T$ on the circle associated to the circle covering map $\varphi$. By Theorem 4.14 \cite{Boudec_Bon_18} it is sufficient to consider only such actions.

\subsection{$G$ acts minimally on the circle}
If the action is minimal then $\varphi$ is necessarily orbit expanding as explained Section \ref{S:minimal}. By Theorem 2.1 in \cite{LPS} the set of parabolic periodic points Par($\varphi$) of $\varphi$ is finite and by Lemma 3.6 therein there exists $\lambda_0>1$ such that for every $x$ that is not a parabolic periodic point there exists $k \ge 1$ such that $(\varphi^k)'(x)\ge \lambda_0$, which readily implies that for every $x$ that is not a parabolic periodic point of $\varphi$ there exists $g\in T_\varphi$ such that $g'(x)>1$.

Let $x\in \textrm{NE}(T_\varphi)$. Then $x$ is a parabolic periodic point of $\varphi$ of some period $s$ and (as $\varphi$ is orbit expanding) $\varphi^s$ has no other fixed points in some $\varepsilon$-neighborhood of $x$. Clearly there exists $g\in T_\varphi$ with $g=\varphi^s$ on some neighborhood of $x$. Since $x\in \textrm{NE}(T_\varphi)$ was arbitrary, this completes the proof of property ($\ast$).

\subsection{There exists an exceptional minimal set $\Lambda$}
Let us now assume that $T_\varphi$ admits the exceptional minimal set $\Lambda$. The results from \cite{LPS} cannot be applied directly, nevertheless the proofs can be modified accordingly (see  Lemmata 3.4-3.6 \cite{LPS}).

\begin{lemma}[cf. Lemma 3.4 in \cite{LPS}]
\label{L:3.2}
Let $x_0$ be a fixed point such that $(x_0, x_0+\varepsilon)\cap \Lambda$ is not empty for every $\varepsilon>0$. Then for every $K\ge 1$ there exists $\varepsilon>0$ such that if $p\in \Lambda$ is a periodic point of period $s$ with $\varphi^i(p)\in (x_0, x_0+\varepsilon)$ for some $i=0,\cdots s-1$ then $(\varphi^s)'(p)\ge K$. The symmetric statement is true when $(x_0-\varepsilon, x_0)\cap\Lambda$ is not empty for every $\varepsilon>0$.
\end{lemma}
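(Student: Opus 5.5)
The plan is to split the periodic orbit of $p$ into an \emph{escape} part, spent in a small one-sided neighbourhood of $x_0$, and a \emph{return excursion}. On each part the derivative is controlled by the distortion bound of Lemma~\ref{L:3.1}. The large factor $K$ comes from the escape part; the excursion contributes only a bounded loss.

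\emph{Setup.} By Lemma~\ref{L:2.2}(b), $\varphi$ is repelling in a right neighbourhood $(x_0,x_0+\delta]$ of $x_0$, with no fixed points there. Shrinking $\delta$, Lemma~\ref{L:2.nice} gives a nice interval $J=(x_0,z)$ with $z\le x_0+\delta$. On $J$ we have $\varphi(x)>x$. By compactness, $\varphi(x)-x\ge c>0$ for all $x$ in the fundamental domain $D=[\varphi^{-1}_{|}(z),z)$, where $\varphi^{-1}_{|}$ is the local inverse branch fixing $x_0$.

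\emph{Escape part.} Let $q=\varphi^i(p)\in(x_0,x_0+\varepsilon)$. Let $N$ be the time with $y=\varphi^N(q)\in D$, so that $q,\varphi(q),\dots,\varphi^N(q)$ increase monotonically inside $J$. The intervals $[\varphi^j q,\varphi^{j+1}q]$, $0\le j\le N$, are pairwise disjoint, so Lemma~\ref{L:3.1} gives
\[
\chi(\varphi^N,[q,\varphi(q)])\le C_0 .
\]
Since $\varphi^N$ maps $[q,\varphi q]$ onto $[y,\varphi(y)]$, of length at least $c$, we get
\[
(\varphi^N)'(q)\ \ge\ e^{-C_0}\,\frac{c}{\varphi(q)-q}.
\]
Because $x_0$ is fixed and $\varphi$ is continuous, $\varphi(q)-q\to0$ as $q\to x_0$. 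Hence $(\varphi^N)'(q)\to\infty$ as $\varepsilon\to0$. This is the source of $K$, and it works uniformly whether $x_0$ is hyperbolic or parabolic.

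\emph{Return excursion.} Starting from $y$ (or from $\varphi(y)$, which is outside $J$ after one more step), follow the orbit until its first return to $J$ at time $t$, at a point $w\in J$. Such a return exists since the orbit is periodic and meets $J$. Let $W$ be the component of $\varphi^{-t}(J)$ containing $y$. Since $J$ is nice and $t$ is a first-return time, the standard nice-interval argument shows that $W,\varphi(W),\dots,\varphi^{t-1}(W)$ are pairwise disjoint. Hence $\chi(\varphi^t,W)\le C_0$ by Lemma~\ref{L:3.1}, and
\[
(\varphi^t)'(y)\ge e^{-C_0}\,|J|/|W|.
\]
We need $|W|$ bounded above by a constant, e.g.\ $|W|\le 1$, so this factor is bounded below by $e^{-C_0}|J|$, a constant independent of $p$.

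\emph{Iterating.} The point $w$ is again in $J$. Either $w$ is the start of a new escape segment, whose derivative is $\ge e^{-C_0}c/\sup_J(\varphi(x)-x)$, or the orbit closes. Grouping the whole period into escape and excursion blocks gives $(\varphi^s)'(p)$ as a product of block derivatives. The block through $q$ contributes a factor tending to $\infty$ with $\varepsilon$.

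\emph{Main obstacle.} The remaining blocks must not contribute an uncontrolled product of small constants. I would first try to arrange that each escape-plus-excursion block has derivative $\ge1$. One way is to use $\varphi'(x_0)\ge1$ together with the estimate above, choosing $J$ small enough that the block from the fundamental domain back to $J$ expands. The alternative is to pull back $J$ along the entire period at once, from $y$ to the next visit of the orbit to $D$, so that only one distortion constant $C_0$ is paid. The nestedness of pullbacks of the nice interval $J$ is what allows this single pullback.

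The symmetric (left) statement follows by reversing orientation.
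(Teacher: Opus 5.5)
Your escape estimate is correct, but the proof stops exactly where you locate the main obstacle, and neither remedy you propose closes it.

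\textbf{Why the argument as written fails.} An excursion gives only $(\varphi^t)'(y)\ge e^{-C_0}|J|/|W|$, and all you know is $|W|\le 1$. Niceness gives at best $W\subseteq J$, that is, a factor $e^{-C_0}$. Each later escape gives only the constant $e^{-C_0}c/\sup_J(\varphi(x)-x)$. Both constants may be $<1$, and the number of visits of the orbit to $J$ in one period is unbounded. So the product is uncontrolled, and it can cancel the gain from the block through $q$.

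Remedy (1) has no mechanism behind it. The condition $\varphi'(x_0)\ge 1$ is information at $x_0$ and says nothing about the derivative along an excursion around the circle. A block (excursion landing at $w$, then escape from $w$) expands only if $|W|$ is small compared with $|D|$. Shrinking $J$ shrinks $D$ as well, so it gives no such control.

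Remedy (2) breaks the hypothesis of Lemma~\ref{L:3.1}. If the orbit re-enters $J$ outside $D$ before its next visit to $D$, then the pullback of $J$ along that stretch is not a first-entry pullback. Its iterates need not be disjoint, so the distortion bound does not apply.

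More fundamentally, you use nothing beyond bounded distortion and niceness. Some global input on the smallness of pullbacks is unavoidable.

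\textbf{The missing ingredient.} The needed input is the paper's statement that pullbacks of small one-sided nice neighbourhoods of $x_0$ are uniformly small. Take $B_0$ nice at $x_0$ and let $B_n$ be the component of $\varphi^{-1}(B_{n-1})$ containing $x_0$. Then $\varepsilon_n=\sup\{|W| : W \text{ a component of } \varphi^{-i}(B_n),\ i\ge 0\}\to 0$. This is proved by compactness, together with the absence of wandering intervals and of intervals asymptotic to a periodic orbit. It must be combined with a separation of scales: the return target is the tiny $B_n$, while gains are measured at the fixed scale $\delta_0=|B_0\setminus B_1|$.

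The paper does this with a single pullback:
\begin{enumerate}[(i)]
\item Take $p$ to be the orbit point nearest $x_0$, with $p\in T=B_{n_0}\setminus B_{n_0+1}$.
\item Pull $B_{n_0}$ back along the period to an interval $J\ni p$ with $J\subseteq T$.
\item Push forward by $\varphi^{n_0}$ to get $|J|/|T|\le e^{C_0}\varepsilon_n/\delta_0$, hence $(\varphi^s)'(p)\ge e^{-C_0}|T|/|J|\ge K$.
\end{enumerate}

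Your block scheme can be repaired along the same lines. Let each escape run from $w\in B_m\setminus B_{m+1}$ ($m\ge n$) up to $B_0\setminus B_1$. Let each excursion run from there to the first entry into $B_n$; this is a genuine first-entry pullback of the nice interval $B_n$, so Lemma~\ref{L:3.1} applies. Start the decomposition at the orbit point nearest $x_0$, so that the period splits exactly into blocks. Every block then expands by at least
\[ e^{-C_0}\frac{\delta_0}{|B_m\setminus B_{m+1}|}\cdot e^{-C_0}\frac{|B_n|}{\varepsilon_n}\ \ge\ e^{-2C_0}\frac{\delta_0}{\varepsilon_n}\ \ge\ K\ \ge\ 1 \]
once $\varepsilon_n\le e^{-2C_0}\delta_0/K$, and the number of blocks becomes irrelevant. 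It is $\varepsilon_n\to 0$, not the local behaviour of $\varphi$ at $x_0$, that makes each block expanding.
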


\begin{proof}[Proof of Lemma \ref{L:3.2}]
Let $x_0$ be a fixed point such that $(x_0, x_0+\varepsilon)\cap \Lambda$ is not empty for every $\varepsilon>0$. By Lemma \ref{L:2.2} $x_0$ is repelling from the right. Let $B_0$ be a nice interval whose left endpoint is $x_0$ (Lemma \ref{L:2.nice}). For each $n\ge 1$ define inductively $B_n$ to be the component of $\varphi^{-1}(B_{n-1})$ containing $x_0$. Then $B_n$ is a nice interval and $|B_n|\to 0$ as $n\to \infty$. Let
$$ \varepsilon_n = \sup \{ |J|: \textrm{$J$ is a component of $ \varphi^{-i}(B_n)$ for some $i\ge 0$} \}.$$

Since $B_{n+1}\subseteq B_n$ we get that $\varepsilon_n$ is nonincreasing. We are going to show that $\varepsilon_n \to 0$ as $n\to\infty$. Let us assume contrary to the claim that $\lim_{n\to\infty} \varepsilon_n=a>0$. There exist sequences $(J_n)$ and $(i_n)$ such that $J_n$ is a component of $\varphi^{-i_n}(B_n)$ such that $J_n, \varphi(J_n), \cdots, \varphi^{i_n-1}(J_n)$ are necessarily pairwise disjoint and $|J_n|\ge a$. {
}Using compactness we find a subsequence $(n_k)$ and an interval $J$ such that the left and right endpoints of $J_n$ converge to the left and right (respectively) endpoint of $J$. If $i_{n_k}\le M$ for some natural $M$ and every $k\ge 1$, then $|B_n|\ge \inf\{ |A|, |\varphi(A)|,\cdots, |\varphi^M(A)| \}$ for all sufficiently large $n$, where the infimum is taken over all intervals $A$ with $|A|\ge a/2$. The infimum is positive since $\varphi$ is a covering map, but on the other hand $|B_n|\to 0$, which leads to a contradiction. We thus conclude that $(i_{n_k})$ is an unbounded sequence. Let us recall that $J_n, \varphi(J_n), \cdots, \varphi^{i_n-1}(J_n)$ are necessarily pairwise disjoint, thus if $I$ is any interval such that $\overline{I}\subseteq J$ then $I, \varphi(I), \cdots, \varphi^{i_{n_k}-1}(I)$ are pairwise disjoint for sufficiently large $k$. Therefore $I$ is either asymptotic to a periodic orbit or a wandering interval. The first cannot occur but by Corollary 1 to Theorem I.2.2 in \cite{deMelo_vanStrien} there are no wandering intervals. This proves that $\varepsilon_n\to 0$. 

Let $\delta_0$ be the length of the components of $B_0\setminus B_1$. Choose $n$ so large that
$$\varepsilon_n\le \exp(-2C_0)\delta_0/K,$$
where $C_0$ is from Lemma \ref{L:3.1}. We are going to show that $B_n$ is the right neighborhood that satisfies the assertion, i.e. that $(\varphi^s)'(p)\ge K$ for any periodic point $p\in \Lambda$ of period $s$ such that $\varphi^i(p)\in B_n$. Since that derivative does not change when $p$ is replaced by any other point in the orbit of $p$, we can assume that $p$ is such that $(x_0, p)$ does not contain any other point from the orbit of $p$. Let $n_0$ be the unique $n_0\ge n$ such that $p\in B_{n_0}\setminus B_{n_0+1}$. Let $T=B_{n_0}\setminus B_{n_0+1}$ and let $J$ be the component of $\varphi^{-s}(B_{n_0})$ that contains $p$. The intervals $T, \varphi(T), \cdots, \varphi^{n_0}(T)$ are pairwise disjoint, $J\subseteq T$, thus $\chi(\varphi^{n_0}, T) \le C_0$ and $\chi(\varphi^{n_0}, J) \le C_0$ by Lemma \ref{L:3.1}. Since $s>n_0\ge n$, $\varphi^{n_0}(J)$ is a component of $\varphi^{n-s}(B_n)$, thus $|\varphi^{n_0}(J)|/|\varphi^{n_0}(T)| \le \varepsilon_n/\delta_0$ and by distortion estimate
$$\frac{|J|}{|T|} \le \exp(C_0) \frac{\varepsilon_n}{\delta_0}.$$

Using the above and Lemma \ref{L:3.1} again we obtain
$$(\varphi^s)'(p) \ge \exp(-C_0)\frac{|B_{n_0}|}{|J|} \ge \exp(-C_0)\frac{|T|}{|J|} \ge K,$$
which completes the proof.
\end{proof}

Let $\mathcal{L}$ (resp. $\mathcal{R}$) be the set of points that are a left (resp. right) endpoint of some component of $\mS\setminus \Lambda$.

\begin{lemma}
\label{L:3.epsilon}
Let $p_0\in \Lambda$ be a fixed point such that $(p_0, p_0+\varepsilon) \cap \Lambda$ is non empty for every $\varepsilon>0$. Let
$$
\varepsilon_n = \sup\{ |J|: \textrm{$J$ is a component of $\mS\setminus (\mathcal{L} \cup \varphi^{-n}(p_0) )$ whose left endpoint is not in $\mathcal{L}$ } \}.
$$
Then $\varepsilon_n\to 0$ as $n\to \infty$. The statement is still true when $(p_0-\varepsilon, p_0) \cap \Lambda$ is non empty for every $\varepsilon>0$, $\mathcal{L}$ is replaced by $\mathcal{R}$ and the phrase "left endpoint" in the definition of $\varepsilon_n$ is replaced by "right endpoint".
\end{lemma}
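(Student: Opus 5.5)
The plan is to argue by contradiction and use compactness together with the absence of wandering intervals, as in the proof of Lemma \ref{L:3.2}. First note that $\varphi^{-n}(p_0)\subseteq\varphi^{-(n+1)}(p_0)$, because $p_0$ is fixed. Hence the partition of $\mS$ by $\mathcal{L}\cup\varphi^{-n}(p_0)$ refines as $n$ grows, and $\varepsilon_n$ is nonincreasing. Suppose $\varepsilon_n\to a>0$. Then for every $n$ there is a component $J_n$ of $\mS\setminus(\mathcal{L}\cup\varphi^{-n}(p_0))$ whose left endpoint is not in $\mathcal{L}$ and with $|J_n|\ge a/2$. Passing to a subsequence, the endpoints converge and we obtain a limit interval $J$ with $|J|\ge a/2$. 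Every closed interval $I$ with $\overline I\subseteq \operatorname{int}J$ is then disjoint from $\varphi^{-n}(p_0)$ for all large $n$ in the subsequence, hence for all $n$ by monotonicity. It is also disjoint from $\mathcal{L}$, since it lies inside some $J_n$.

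The key step is to show that such an $I$ contains no point of $\Lambda$. First, $\bigcup_n\varphi^{-n}(p_0)$ is dense in $\Lambda$: the set of iterated preimages of any point of $\Lambda$ accumulates on the whole minimal set $\Lambda$, by minimality of the $T_\varphi$-action on $\Lambda$ (property (iii) in Section \ref{S:minimal}). If $I$ contained an interior point of $\Lambda$ relative to $\Lambda\cap\operatorname{int}I$, it would meet this dense set, which is a contradiction. One has to be slightly careful here: $I$ could meet $\Lambda$ in a set that does not meet the preimages. But any point of $\Lambda$ in the open set $\operatorname{int}I$ has $\Lambda$-neighbours in $\operatorname{int}I$, since $\Lambda$ is perfect. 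So $\Lambda\cap\operatorname{int}I$ is a nonempty relatively open subset of $\Lambda$ and must meet the dense set. Therefore $\operatorname{int}I\cap\Lambda=\emptyset$, so $\operatorname{int}I$ lies in a single gap of $\Lambda$.

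Since $I$ was an arbitrary interval compactly inside $J$, all of $\operatorname{int}J$ lies in one component $U$ of $\mS\setminus\Lambda$. Now use the left endpoints. The left endpoint $l_n$ of $J_n$ is not in $\mathcal{L}$, so it lies in $\varphi^{-n}(p_0)\subseteq\Lambda$. The right part of $J_n$ has length at least $a/2$ and contains no point of $\mathcal{L}$. But $l_n\in\Lambda$ is not a left endpoint of a gap. Therefore either $l_n$ is accumulated from the right by $\Lambda$, or $l_n$ is the right endpoint of a gap, with $\Lambda$ points immediately to its right in either case; the alternative is that it is a left endpoint. In the first case, a point of $\Lambda$ accumulated from the right inside $J_n$ forces gaps, and hence points of $\mathcal{L}$, arbitrarily close to the right of $l_n$ inside $J_n$, because $\Lambda$ is nowhere dense. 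This is a contradiction. In the second case, the points of $\Lambda$ just to the right of $l_n$, being in $\Lambda$ and not isolated, again produce gaps to the right and hence points of $\mathcal{L}$ in $J_n$. So in fact $J_n$ cannot have length bounded below.

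Put more simply, the whole argument reduces to this: if $l\in\Lambda\setminus\mathcal{L}$, then $\mathcal{L}$ accumulates on $l$ from the right, and a uniform-in-$n$ version is needed. The uniformity comes from the limit $J$. Its left endpoint $l=\lim l_n$ lies in $\Lambda$, since $\Lambda$ is closed, and $\operatorname{int}J$ lies in a gap $U$, so $l$ is the left endpoint of $U$, i.e. $l\in\mathcal{L}$. Then $U=(l,r)$ with $r\ge l+a/2$, and $l_n\to l$ with $l_n\in\Lambda\setminus\mathcal{L}$. For $n$ large, $l_n$ lies outside $U$ and therefore to the left of $l$ or equal to $l$. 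It is not equal to $l$, because $l\in\mathcal{L}$. So $l_n<l<l_n+|J_n|$, meaning $l\in\mathcal{L}$ lies inside $J_n$, which is a contradiction.

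The main obstacle is the density step, namely that iterated preimages of $p_0$ are dense in $\Lambda$. I would justify it via the minimality of $\Lambda$ under $T_\varphi$ (every $T_\varphi$-orbit in $\Lambda$ is dense in $\Lambda$). Since $p_0$ is fixed, its $T_\varphi$-orbit is $\bigcup_n\varphi^{-n}(p_0)$. The symmetric statement with $\mathcal{R}$ follows by reversing orientation.
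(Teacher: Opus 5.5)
Your proof is correct, and its decisive final step is the paper's argument: density of $\bigcup_n\varphi^{-n}(p_0)$ in $\Lambda$ puts the interior of the limit interval inside a gap, so its left endpoint $l$ lies in $\mathcal{L}$, and then $l_n\to l$ with $l_n\notin\mathcal{L}$ forces $l\in J_n$ (you get $l_n<l$ from $l_n\in\Lambda\setminus U$, whereas the paper first makes the $J_{n_k}$ nested). Note, however, that your middle paragraph already finishes the proof with no limit or density argument: the left endpoint of a component lies in the closure of $\mathcal{L}\cup\varphi^{-n}(p_0)$, hence in $\Lambda$ (though not necessarily in $\varphi^{-n}(p_0)$, as you assert), and $\mathcal{L}$ accumulates from the right at every point of $\Lambda\setminus\mathcal{L}$, so no component of positive length has its left endpoint outside $\mathcal{L}$, which gives $\varepsilon_n=0$ for every $n$ and makes the uniformity in $n$ unnecessary.
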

\begin{proof}
The sequence $\varepsilon_n$ is obviously nonincreasing, so let us assume contrary to the claim that $\varepsilon_n\to d>0$ as $n\to \infty$. There exist a sequence $(n_k)$ of natural numbers and a sequence of components $J_{n_k}=(a_{n_k}, b_{n_k})$ of $\mS\setminus(\varphi^{-n_k}(p_0) \cup \mathcal{L})$ such that $a_{n_k}\not \in \mathcal{L}$, $a_{n_k}\to a$, $b_{n_k}\to b$ and $|b-a|\ge d/2$. Since $J_{n_k}$ is a component of $\mS\setminus(\varphi^{-n_k}(p_0) \cup \mathcal{L})$, it cannot contain an endpoint of any of the intervals $J_{n_1},\cdots J_{n_{k-1}}$ and consequently we can assume that $(a_{n_k})$ is nondecreasing and $(b_{n_k})$ is nonincreasing, i.e. $J_{n_1}\supseteq J_{n_2} \supseteq \cdots$ The set $\Lambda$ is closed, $a_{n_k}\in \Lambda$ for $k\ge 1$, thus $a\in \Lambda$. The $\Lambda \cap (a,a+d/2)$ is empty, since $\bigcup_{n\ge 0} \varphi^{-n}(p_0)$ is dense in $\Lambda$, therefore $a\in \mathcal{L}$. But by the definition of $J_{n_k}$ this is possible only if $a_{n_k}=a$ for every $k$. This means $a_{n_k}\in \mathcal{L}$, which is again a contradiction with the choice of $J_{n_k}$. In the symmetric case the proof is analogous.
\end{proof}

\begin{lemma}[cf. Lemma 3.5 in \cite{LPS}]
\label{L:3.3}
For any $K\ge 1$ there exists $s_0$ such that if $p\in \Lambda$ is a periodic point of period $s \ge s_0$ then $(\varphi^s)'(p)\ge K$. In particular, the set of parabolic periodic points is finite.
\end{lemma}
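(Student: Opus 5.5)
We follow the scheme of Lemma 3.5 in \cite{LPS}, replacing the nice neighbourhoods of the original argument by the partition of $\Lambda$ generated by preimages of a fixed point and by the gaps of $\Lambda$. The argument has three steps.

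\emph{Step 1: a convenient fixed point.} A degree two covering has a fixed point. We take a fixed point $p_0\in\Lambda$. If $0\in\Lambda$ we use $p_0=0$; otherwise we pass to the fixed point at the boundary of the relevant periodic interval. By Lemma \ref{L:2.2}, $p_0$ is accumulated by $\Lambda$ from at least one side. Say $\Lambda$ accumulates on $p_0$ from the right; the left case is symmetric and uses $\mathcal R$. Lemma \ref{L:3.2} then gives $\varepsilon>0$ such that every periodic $p\in\Lambda$ whose orbit enters $(p_0,p_0+\varepsilon)$ satisfies $(\varphi^s)'(p)\ge K$. It therefore suffices to show that every periodic orbit in $\Lambda$ of sufficiently large period either enters $(p_0,p_0+\varepsilon)$ or is uniformly expanded for another reason.

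\emph{Step 2: combinatorics via Lemma \ref{L:3.epsilon}.} Fix $n$ with $\varepsilon_n$ much smaller than $\varepsilon$ and than the minimal length of the components of $\mS\setminus(\mathcal L\cup\varphi^{-n}(p_0))$ that meet $\Lambda$ nontrivially. Let $p\in\Lambda$ be periodic of period $s$, and take $q$ in its orbit that is closest to the left of the finite set $\varphi^{-n}(p_0)$, i.e. in a component $J$ whose left endpoint $a\in\varphi^{-n}(p_0)\setminus\mathcal L$. Consider the pullback $W$ of $J$ along the orbit: the component of $\varphi^{-s}(J)$ containing $q$. Since $q$ is the first orbit point after $a$ in that component, the intervals $W,\varphi(W),\dots,\varphi^{s-1}(W)$ are pairwise disjoint, and Lemma \ref{L:3.1} bounds $\chi(\varphi^s,W)\le C_0$. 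Moreover $W\subsetneq J$ with a definite gap. The gap between $a$ and $W$ comes from points of $\Lambda$ in $J$ which, for $s$ large, cannot be mapped into $W$, because a preimage of $a$ lies in $W$'s complement. This gives $|\varphi^s(W)|/|W|=|J|/|W|$ bounded below. Applying Lemma \ref{L:3.epsilon} to the sub-components created by higher preimages $\varphi^{-m}(p_0)$, $m\le s$, shows that $|W|\le\varepsilon_{m}$ with $m\to\infty$ as $s\to\infty$. Hence $|J|/|W|\ge \delta/\varepsilon_{m(s)}\to\infty$, and bounded distortion yields $(\varphi^s)'(q)\ge e^{-C_0}|J|/|W|\ge K$ for $s\ge s_0$.

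\emph{Step 3: finiteness of parabolic points.} Parabolic periodic points in $\Lambda$ then have period $<s_0$, so there are finitely many of them. Periodic points outside $\Lambda$ lie in periodic intervals, and we handle them separately or note that the statement concerns $\Lambda$.

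The main obstacle is Step 2. One must guarantee that the pullback of $J$ along the orbit is disjoint along the orbit and that its length is governed by $\varepsilon_m$ with $m$ growing with $s$. The component may instead have its left endpoint in $\mathcal L$, which Lemma \ref{L:3.epsilon} excludes. I would resolve this by choosing $q$ as the orbit point closest to $p_0$ from the right and using Lemma \ref{L:3.2} directly when $q$ is within $\varepsilon$ of $p_0$. Otherwise, a long orbit avoiding $(p_0,p_0+\varepsilon)$ stays in finitely many combinatorial pieces, and the same nice-interval argument as in Lemma \ref{L:3.2} applies.
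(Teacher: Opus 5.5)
There is a genuine gap: the case that carries the real content of the lemma is not proved. That case is periodic orbits of large period that never enter $(p_0,p_0+\varepsilon)$.

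In Step 2 you pull back a whole component $J$ of $\mS\setminus(\mathcal L\cup\varphi^{-n}(p_0))$ along the orbit. But $J$ is not nice with respect to the orbit of $p$: further orbit points may lie in $J$ to the right of $q$. So nothing prevents $\varphi^i(W)$ from meeting $W$, or $W$ from leaving $J$, and the pairwise disjointness needed for Lemma \ref{L:3.1} is unjustified. Your choice of $q$ makes $(a,q)$ nice, not $J$, but $|(a,q)|$ has no lower bound.

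The choice of $n$ is also inconsistent. $\varepsilon_n$ is a supremum over such components, so it cannot be much smaller than their minimal length. With $n$ fixed, nothing forces the bound on $|W|$ to tend to zero with $s$, and the index $m(s)$ and the constant $\delta$ are never specified. Your last paragraph names the right first move. It then dismisses the remaining case with ``stays in finitely many combinatorial pieces'', which is not the mechanism.

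What is missing is to pull back $I=(p_0,q)$ itself, where $q$ is the orbit point closest to $p_0$ on the right. This is the paper's argument.

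\begin{itemize}
\item $I$ is nice because $p_0$ is fixed and no orbit point lies in $(p_0,q)$. Moreover $|I|\ge\varepsilon$ precisely in the case left over after Lemma \ref{L:3.2}.
\item Let $J'$ be the component of $\varphi^{-s}(I)$ adjacent to $q$. It lies in $I$ and has its other endpoint in $\varphi^{-s}(p_0)$.
\item For $0<i<s$ we have $\varphi^i(J')\cap I=\emptyset$. Otherwise $\varphi^i(J')$ would contain $p_0$ or $q$, which would put $p_0$ or a point of the orbit into $I$.
\item Hence $J',\dots,\varphi^{s-1}(J')$ are pairwise disjoint, and $\chi(\varphi^s,J')\le C_0$.
\item The paper bounds $|J'|\le\varepsilon_s$ by Lemma \ref{L:3.epsilon}, using $\varphi^{-s}(p_0)\cap\mathcal L=\emptyset$. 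The index here is the period $s$ itself, so the bound tends to zero automatically.
\item Take $s_0$ with $\varepsilon_s\le\varepsilon e^{-C_0}/K$ for $s\ge s_0$. Then $(\varphi^s)'(q)\ge e^{-C_0}|I|/|J'|\ge K$.
\end{itemize}

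Alternatively, $|J'|\to0$ uniformly in $s$ follows from the compactness and no-wandering-interval argument in the proof of Lemma \ref{L:3.2}. A limit interval would have compact subintervals with pairwise disjoint iterates. Its interior would then miss $\bigcup_m\varphi^{-m}(p_0)$, which is dense in $\Lambda$, so it would be the closure of a gap. But a gap is eventually mapped onto a periodic interval on which an iterate of $\varphi$ is a homeomorphism fixing the endpoints. Large compact pieces of such a gap therefore have overlapping iterates, a contradiction.

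Once this estimate is in place, your Step 3 goes through.
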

\begin{proof}
Let $p_0\in \Lambda$ be a fixed point with $(p_0, p_0+\varepsilon)\cap \Lambda$ not empty for every $\varepsilon>0$. Let $\varepsilon_n$ be defined as in Lemma \ref{L:3.epsilon}, $n=1,2,\ldots$ By Lemma \ref{L:3.2} there exists $\varepsilon>0$ such that if $p\in \Lambda$ is a periodic point of period $s$ with $\varphi^i(p)\in (p_0, p_0+\varepsilon)$ for some $i=0,1,\cdots, s-1$ then $(\varphi^s)'(p)\ge K$. By Lemma \ref{L:3.epsilon} we can find $s_0$ so large that $\varepsilon_s \le \varepsilon/(e^{C_0}K)$ for $s\ge s_0$.

Let $p\in \Lambda$ be a periodic point with period $s\ge s_0$. If $\varphi^i(p)\in (p_0, p_0+\varepsilon)$ for some $i=0,1,\cdots, s-1$ then the assertion follows. If that is not the case, let $I$ be an open interval bounded by $p_0$ and some point $p'\in \textrm{orb}(p)$ with the property that $(p_0, p')$ does not contain any other point from orb($p$). Then $I$ is a nice interval and $|I|\ge \varepsilon$. Let $J$ be the component of $\varphi^{-s}(I)$ whose endpoint is $p'$. Since $\varphi^{-n}(p_0)$ is disjoint from $\mathcal{L}$ (see the definition just before Lemma \ref{L:3.epsilon}) $J$ is contained in a component of $\mS\setminus (\mathcal{L} \cup \varphi^{-n}(p_0) )$ whose left endpoint is not in $\mathcal{L}$ and thus $|J|\le \varepsilon_s$. By the choice of $s_0$, $|J| \le \varepsilon /(e^{C_0}K)$. Clearly $\varphi^i(J)\cap I=\emptyset$ for $i=1,2, \cdots, s-1$. By Lemma \ref{L:3.1} we have
$$(\varphi^s)'(p) = (\varphi^s)'(p') \ge e^{-C_0} \frac{|I|}{|J|}
\ge 
e^{-C_0} \frac{\delta}{\varepsilon_s} \ge K.
$$

From that is clear that the number of parabolic periodic points is finite.
\end{proof}

We are now in position to finish the proof. First we are going to show that if $x\in \Lambda$ is not a parabolic periodic point of $\varphi$ then there exists $s$ such that $(\varphi^s)'(x)>1$ (this part follows the lines of the proof of Lemma 3.6 \cite{LPS} with small changes). Then we are going to use that fact to show that $T_\varphi$ satisfies property $(\Lambda\ast)$.

By Lemma \ref{L:3.3} there exists $s_0$ such that if $p$ is a periodic point with period $s>s_0$ then $(\varphi^s)'(p)\ge 2\exp C_0$. Let $1<\lambda_0<\lambda_1<2$ be a constant such that if $p$ is periodic point of $\varphi$ that is not parabolic, $p\in \Lambda$, with period $s\le s_0$, then $(\varphi^s)'(p)>\lambda_1$ (this is possible as every periodic point in $\Lambda$ that is not parabolic is necessarily hyperbolic expanding). Let $\delta>0$ be such that $(\varpi^s)'(x)-(\varphi^s)'(y)|<\lambda_1-\lambda_0$ whenever $|x-y|<\delta$ and $s\le s_0$.

Let us recall that the full invariant set \eqref{E:2.lambda} is the set of points that do not fall eventually into the sum of a finite number of maximal periodic intervals. Let $\mathcal{P}\subseteq \Lambda$ be the set of points that are boundary points of this finite union. Let $x\in \Lambda$ be a point that is not parabolic periodic and let $A$ be a closed nice interval containing  $x$ with $|A|<\delta$. There exists such an interval since every component of $\mS\setminus \varphi^{-n}(\mathcal{P})$ is a nice interval and $\bigcup_{n>0}\varphi^{-n}(\mathcal{P})$ is dense in $\Lambda$.  By Lemma \ref{L:3.3} we can also assume that $A$ is disjoint from the set of parabolic periodic points.

Fix $y\in A$ and let $k$ be the first return time of $y$ to $A$. Let $J$ be the component of $\varphi^{-k}(A)$ which contains $y$. Then $\varphi^k$ is a diffeomorphism from $J$ to $A$ with distortion bounded by $C_0$. Since $J\subseteq A$, $\varphi^k$ contains a fixed point $p$ of $\varphi^k$. By the definition of $A$, $p$ is not a parabolic periodic point and $k$ is the fundamental period of $p$. If $k\le s_0$ then $(\varphi^k)(p)>\lambda_1>1$ by the choice of $\lambda_1$. Since $|x-p|<\delta$, $(\varphi^k)'(x)>(\varphi^k)'(p) - (\lambda_1-\lambda_0)>\lambda_0>1$ (by the choice of $\delta$). If $k>s_0$, then $(\varphi^k)'(p)\ge 2 \exp C_0$ by the choice of $s_0$. Thus by distortion estimates $(\varphi^k)'(x)\ge \exp(-C_0) (\varphi^^k)'(p)\ge 2$. This completes the proof of the first part.

Let $x\in \Lambda$. If $x$ is not a parabolic periodic point then $(\varphi^k)'(x)>1$ for some $k\ge 1$ by the first part of the proof. Since we can easily construct $g\in T_\varphi$ such that $g=\varphi^k$ on some neighborhood of $x$, thus $x$ is not in NE($T_\varphi$). Therefore if $x\in$NE($T_\varphi$), then $x$ is necessarily a parabolic periodic point of $\varphi$ with some period $s$. By Lemma \ref{L:2.2} (b) if $x$ is not isolated from $\Lambda$ from the right then $\varphi^s$ is necessarily parabolic repelling from the right. As before $g$ can be chosen so that $\varphi^s=g$ in some neighborhood of $x$. If $x$ is not isolated from the left then $g$ is constructed similarly. This proves property ($\Lambda\ast$).

\bibliographystyle{plain}
\bibliography{Bibliography}

\end{document}